\documentclass[twoside,12pt,a4]{article}
\usepackage{latexsym}
\usepackage{amsmath,amsthm,amsfonts,amssymb}
\usepackage{graphics,graphicx}
\usepackage{xy} \xyoption{all}
\usepackage{soul}
\usepackage{color}
\usepackage{cancel}
\usepackage{doi}
\usepackage{enumerate}
\usepackage{verbatim}
\usepackage{hyperref}

\def\={&=& }
\def\sobolev#1#2{\lra{#1,#2}_{-\frac12}}

\newtheorem{Thm}{Theorem}[section]


\newtheorem{theorem}[Thm]{Theorem}




\def\Cref#1{Corollary~\ref{#1}}


\def\comm#1#2{ \l[ #1 \, ,\, #2 \r] }


\def\calh{{\mathcal H}}
\def\calt{{\mathcal T}}

\def\cals{{\mathcal S}}
\def\cale{{\mathcal E}}


\def\fa{{\mathfrak A}}
\def\K{{\mathfrak K}}


\def\R{{\mathbb R}}

\def\pslash{\hbox{$\partial$\kern-1.2ex \raise.14ex\hbox{/}\kern.5ex}}
\def\Epslash{\hbox{{\rm {\hbox{$\partial$\kern-1.2ex \raise.14ex\hbox{/}\kern.5ex}\kern-.6ex \lower.28ex \hbox{$_E$}}}}}
\def\Epslashi{\hbox{{\rm {\pslash\kern-.6ex \lower.28ex \hbox{$_{E,i}$}}}}}
\def\Epslashii#1{\hbox{{\rm {\pslash\kern-.6ex \lower.28ex \hbox{$_{E,#1}$}}}}}
\def\Epslasht{\hbox{{\rm {\pslash\kern-.6ex \lower.28ex \hbox{$_{E,\calt}$}}}}}
\def\Epslashtstar{\hbox{{\rm {\pslash\kern-.6ex \lower.28ex \hbox{$_{E,\calt^*}$}}}}}
\def\Epslashti{\hbox{{\rm {\pslash\kern-.6ex \lower.28ex \hbox{$_{E,\calt_i}$}}}}}

\def\hensp#1{\enspace\hbox{#1}\enspace}

\def\l{\left}
\def\r{\right}
\def\la{\lambda}

\def\part{\partial}

\def\be{\begin{equation}}
\def\ee{\end{equation}}
\def\beq{\begin{eqnarray}}
\def\eeq{\end{eqnarray}}
\def\nn{\nonumber \\ }







\textwidth=7in\textheight=8.3in 
\setlength{\evensidemargin}{2.5pc} \setlength{\oddsidemargin}{2.5pc} \setlength{\topmargin}{1pc}\tolerance=6000
\parskip=0pt
\parindent=18pt
\baselineskip=13pt \abovedisplayskip=12pt plus3pt minus2pt \belowdisplayskip=12pt plus3pt minus2pt \abovedisplayshortskip=12pt plus3pt minus2pt \belowdisplayshortskip=12pt plus3pt
minus2pt

\pagestyle{myheadings}

\font\runningheadfont=cmcsc10

\def\lrp#1{\left( #1\right)}
\def\lrp#1{\left( #1\right)}
\def\lra#1{\left\langle #1\right\rangle}

\def\abs#1{{\l\vert #1 \r\vert}}
\def\norm#1{{\l\Vert #1 \r\Vert}}

\newif\ifMarginNotes \MarginNotestrue
\def\mrgn#1{\ifMarginNotes\setbox0=\vtop{\hsize 6.75pc
   {\noindent\relax #1\par}}\leavevmode
   \vadjust{\dimen0=\dp0 \dimen1=\ht0\advance\dimen1 by .5ex
 \advance\dimen0 by -.5ex
  \kern-\dimen1\hbox{\kern\hsize\kern.5pc$\leftarrow$
  \box0}\kern-\dimen0}\fi}
\catcode`\@=11 \font\twelvemsb=msbm10 scaled 1200 \font\tenmsb=msbm10 \font\ninemsb=msbm7 scaled 1200
\newfam\msbfam
\textfont\msbfam=\twelvemsb  \scriptfont\msbfam=\tenmsb
  \scriptscriptfont\msbfam=\ninemsb
\def\msb@{\hexnumber@\msbfam}
\def\Bbb{\relax\ifmmode\let\next\Bbb@\else
 \def\next{\errmessage{Use \string\Bbb\space only in math
mode}}\fi\next}
\def\Bbb@#1{{\Bbb@@{#1}}}
\def\Bbb@@#1{\fam\msbfam#1}
   \font\twelveeufm=eufm10 scaled 1200 \font\teneufm=eufm10 
\font\seveneufm=eufm7 
\newfam\eufmfam
\textfont\eufmfam=\twelveeufm \scriptfont\eufmfam=\teneufm \scriptscriptfont\eufmfam=\seveneufm
\def\frak{\relax\ifmmode\let\next\frak@\else
 \def\next{\errmessage{Use \string\frak\space only in math mode}}\fi\next}
\def\frak@#1{{\frak@@{#1}}}
\def\frak@@#1{\fam\eufmfam#1}

\catcode`\@=12

\markboth{{\runningheadfont  Arthur Jaffe}}{{\runningheadfont
Stochastic PDE, Reflection Positivity, and Quantum Fields}}

\title{Stochastic PDE, Reflection Positivity, and Quantum Fields
}
\author{Arthur Jaffe\\
Harvard University, 
Cambridge, MA 02138, USA
\\
\href{mailto:arthur\_jaffe@harvard.edu}{\color{blue}⟨arthur\_jaffe@harvard.edu⟩\color{black}}
}
\date{\today}
\thispagestyle{empty}
\begin{document} 

\maketitle
 \hsize=7truein \hoffset=-.75truein
\thispagestyle{empty}

\begin{abstract}
We investigate stochastic quantization as a method to go from a classical PDE  (with stochastic time $\lambda$)  to a corresponding quantum theory in the limit $\lambda\to\infty$.  We test the method for a linear PDE satisfied by the free scalar field. 
We begin by giving some background about the importance of  establishing the property of  {\em reflection positivity} for the limit $\lambda\to\infty$. We then prove that the measure determined through stochastic quantization of the free scalar field violates reflection positivity (with respect to reflection of the physical time) for every $\lambda<\infty$.  If a non-linear perturbation of the linear equation is continuous in the perturbation parameter, the same result holds for small perturbations.  
For this reason, one needs to find a modified procedure for stochastic quantization, in order to use that method to obtain a quantum theory. 
\end{abstract}

\tableofcontents

\section{Quantum Theory}\label{Sect:Quantum Theory}
Let $x=(\vec x, t)\in\R^{d}$ denote a space-time point and denote time reflection by the map $\vartheta x=(\vec x,-t)$, and let $\R^{d}_{+}$ denote the subspace with $0\leqslant t$.  Let $\cals'(\R^{d})$ denote the Schwartz distributions on Euclidean space, and suppose that $d\mu(\Phi)$ is a measure on $\cals'(\R^{d})$ with characteristic functional $S(f)=\int e^{i\Phi(f)} d\mu(\Phi)$. Consider the group $G$ of Euclidean transformations of $\R^{d}$, namely rotations, translations, and the reflection $\vartheta$, acting on functions by $f^{g}(x)=f(g^{-1}x)$.  Assume  that $S(f)$ satisfies three properties:
 	\begin{enumerate}
	\item{} Euclidean invariance: $S(f^{g})=S(f)$ for all $g\in G$.
	\item{} Reflection positivity (RP): for any finite set of functions $f_{i}\in\cals(\R^{d}_{+})$, the matrix with elements $S_{ij}=S(\overline{f_{i}}-\vartheta f_{j})$ is positive definite. 
	\item{} Exponential bound: for some $k<\infty$, and some Schwartz space norm $\norm{\ \cdot\ }$, one has  
		\be
			\abs{S(f)}
			\leqslant e^{\norm{f}^{k}}\;.
		\ee
	\end{enumerate}
Then there exists a relativistic quantum theory on a Hilbert space $\calh$ equipped with a unitary representation of the Poincar\'e group.  The resulting Hamiltonian is positive, and it has a Poincar\'e-invariant vacuum vector.  The field satisfies all the Wightman axioms with the possible exception of uniqueness of the vacuum.

These results relate probability theory on function spaces to quantum physics.  Along with the direct proofs of the existence of such measures $d\mu(\Phi)$, they illustrate high points in the extensive development of constructive quantum field theory. See \cite{GJB} for more details and references.   

The main point in establishing the reflection-positivity condition (ii) is that it provides the connection from probability theory to quantum theory.  The positive form determined by (ii) gives the inner product that defines the quantum-mechanical Hilbert space. In the case of a non-Gaussian measure $d\mu(\Phi)$, one always needs to construct this measure as a weak limit of approximating measures. All the standard constructions of  $d\mu(\Phi)$ go to great lengths in order to preserve reflection positivity in the approximations. For once a positivity condition is lost, it becomes very problematic  to establish reflection positivity for the limit.   (This is true in particular for the $\mathcal{P}(\varphi)_{2}$ and $\varphi^{4}_{3}$ examples discussed in \cite{GJB}.)  This fact motivates the investigation of the present paper. 

\subsection{Quantization by SPDE}
An alternative to the approach described in the previous section to obtain $d\mu(\Phi)$ is called {\em stochastic quantization}.  The idea is to study the solution to a classical stochastic partial differential equation (SPDE).  This  idea goes back to an unpublished report of Kurt Symanzik \cite{S}, to the work of Edward Nelson \cite{N1,NB},  and  to Parisi and Wu~\cite{PW} who also observe a relation to super-symmetry.  

However, a full mathematical program using this method to construct a complete, non-linear, relativistic quantum field theory satisfying the Wightman axioms, has not yet been carried out.  
Recently Martin Hairer reinvestigated these questions and has made substantial progress  \cite{H1,H2}, as well as in his many other recent works on the ArXiv.  This includes interesting work on the $\Phi^{4}_{3}$ equation. 

 One wishes to obtain $d\mu(\Phi)$ as the limit $\la\to\infty$ of a sequence of measures $d\mu_{\la}(\Phi)$ defined by a dynamical equation for a field subjected to a random force.   
The  dynamical equation for the classical field $\Phi_{\la}(x)$ is first order in an auxiliary ``stochastic-time'' parameter $\la$.  The  linear driving force $\xi_{\la}(x)$  has a white noise distribution.  One obtains the probability measure $d\mu_{\la}(\Phi)$ from the probability measure $d\nu(\xi)$ on the solutions $\Phi_{\la}$ to the stochastic equation with vanishing initial data.  This measure  is called the stochastic quantization of the equation. 


The general equation for the classical field $\Phi_{\la}(x)$ with a Euclidean action functional $\fa(\Phi_{\la})=\int dx \,\fa(\Phi_{\la}(x))$ is the stochastic partial differential equation (SPDE)
	\be\label{Fundamental SPDE}
		\frac{\partial\Phi_{\la}(x)}{\partial\la}
		=  - \frac12\, \frac{\delta\fa(\Phi_{\la})}{\delta \Phi_{\la}(x)}+ \xi_{\la}(x)\;.
	\ee
Here $ \xi_{\la}(x)$ is the driving force, and $\la$ denotes the auxiliary stochastic ``time'' parameter. One chooses the force $\xi_{\la}(x)$ to be white-noise: this means that $\xi_{\la}(x)$ has the Gaussian probability distribution $d\nu(\xi)$ with mean zero, and with  covariance 
	\be
		\int \xi_{\la}(x)\,\xi_{\la'}(x') \,d\nu(\xi)
		= \delta(\la-\la')\delta(x-x')\;.
	\ee
We must also specify the initial data $\Phi_{0}(x)$ for the solution to \eqref{Stochastic Linear Eqn}.   Here we take zero initial data, $\Phi_{0}(x)=0$.  We see later in the linear case that the initial data vanishes in the solution in the limit $\la\to\infty$.   
	
\subsection{The Measure $d\mu_{\la}(\Phi)$}
One assumes that one can reconstruct the measure $d\mu_{\la}(\Phi)$ from its moments. These moments are the moments in the measure $d\nu(\xi)$ of the solution $\Phi_{\lambda}(x)$ to the classical SPDE \eqref{Fundamental SPDE} with fixed initial data. Define
	\be\label{Defn Measure lambda}
	 	\int \Phi(f)^{n} \,d\mu_{\la} (\Phi)
		= \int \Phi_{\la}(f)^{n} \,d\nu(\xi)\;.
	\ee
As the moments $\int \Phi_{\la}(f_{1})\cdots \Phi_{\la}(f_{n})\,d\nu(\xi)$ are symmetric under permutation of the functions $f_{1}, \ldots, f_{n}$, the non-diagonal moments  $\int \Phi(f_{1})\cdots \Phi(f_{n})\,d\mu_{\la}(\Phi)$ can be obtained from the diagonal moments \eqref{Defn Measure lambda} by polarization.  Thus the moments  \eqref{Defn Measure lambda} determine the measure $d\mu_{\lambda}(\Phi)$. 

In general the measure $d\mu_{\la}(\Phi)$ has a complicated structure and is difficult to study.  However for a linear equation one can easily determine the measure. We now study a linear case in order to understand the relation between $d\mu_{\la}(\Phi)$ and the associated quantum theory.

\subsection{The Linear (Free-Field) Case}
The most elementary example of stochastic quantization arises from the linear equation for the free scalar field.  In this case $\fa(\Phi_{\la}(x))=\frac{1}{2}(\nabla\Phi_{\la}(x))^{2}  +\frac12 m^{2} \Phi_{\la}(x)^{2}   $.  The corresponding  SPDE \eqref{Fundamental SPDE} is 
%
	\be\label{Stochastic Linear Eqn}
		\frac{\partial \Phi_{\la}(x)}{\partial \la}
		= -\frac12\lrp{-\Delta +m^{2}}\,\Phi_{\la}(x) + \xi_{\la}(x)\;.
	\ee
Let $\K_{\la}=e^{-\frac\la2\lrp{-\Delta +m^{2}}}$ denote the heat kernel for the homogeneous equation ($\xi_{\lambda}\equiv0$). Its integral kernel $\K_{\la}(x,x')=\K_{\la}(x-x')$ is translation invariant and  satisfies 
	\be
		\frac{\partial }{\partial \la} \K_{\la}(x-x')
		= \frac12\lrp{\Delta -m^{2}}\K_{\la}(x-x')\;,
	\ee
with initial data 
	\be
		\lim_{\la\to0+}\K_{\la}(x-x') 
		= \delta(x-x')\;.
	\ee
Then the solution to  \eqref{Stochastic Linear Eqn}  is
	$
		\Phi_{\la}(x)
		=  (\K_{\la}\,\Phi_{0})(x)
		+ \int_{0}^{\la} d\alpha\, (\K_{\la-\alpha}\,
				\xi_{\alpha})(x) 
	$.	
Here $\Phi_{0}(x)$ denotes the $\la=0$ initial data. When averaged, 
	\be\label{Inhomogeneous Solution}
	\Phi_{\la}(f)=(\K_{\la}\Phi_{0})(f)+\int_{0}^{\la} d\alpha\, \lrp{\K_{\la-\alpha}\xi_{\alpha}}(f)\;.
	\ee

\subsection{The Measure $d\mu_{\lambda}(\Phi)$ for the Free Field}
Since this solution is linear in $\xi_{\lambda}$, it is clear that the Gaussian distribution of $\xi_{\lambda}$ will yield a Gaussian distribution $d\mu_{\la}(\Phi)$. This measure is determined by its integral and first two moments.  By definition the integral of $d\nu(\xi)$ is one, so from \eqref{Defn Measure lambda}, we infer that $\int d\mu_{\la}(\Phi)=1$.

We claim that the first moment of $d\mu(\Phi_{\la})$ only depends on the initial data for the SPDE and equals
	\be
		\int \Phi_{\la}(f)\,d\nu(\xi)
		= (\K_{\la}\,\Phi_{0})(f)\;.
	\ee
This follows from the fact that the measure $d\nu(\xi)$ has mean zero, and the solution for $\Phi_{\la}$ has the form  \eqref{Inhomogeneous Solution}.   Note $e^{\la \Delta}$ is a contraction on any $L^{p}$, and $0\leqslant\K_{\la}(x-x')$. Hence for any $p$, one infers  $\norm{\K_{\la}\Phi_{0}}_{L^{p}}\to0$  as $\la\to\infty$.  For this reason, we can assume vanishing initial data $\Phi_{0}=0$, without affecting the limit $\la\to\infty$. 

We claim that the second (diagonal) moment of $d\mu_{\la}(\Phi)$ has the form 
	\be
		\int \Phi_{\la}(f)^{2} \,d\nu(\xi)
		= \lrp{ (\K_{\la}\,\Phi_{0})(f)}^{2}
		+ \lra{\bar f, D_{\la}f}_{L^{2}}\;.
	\ee
Here $D_{\la}$ is the covariance of $d\mu_{\la}(\Phi)$.  It is a linear transformation, and in our case it is independent of the initial data.  Hence the initial data does not influence any moment in the limit $\lim_{\la\to\infty}d\mu_{\la}(\Phi)$.   
The form of $D_{\la}$ follows from the solution  \eqref{Inhomogeneous Solution} to the linear SPDE.  We claim that 
	\be
		D_{\la} = (I-e^{-\la C^{-1}})C\;,
		\quad\text{where}\quad
		C=(-\Delta+m^{2})^{-1}\;.	
	\ee
In fact the integral kernel of $D_{\la}$ is 
	\beq\label{Covariance}
		 D_{\la}(x,y)
		&=& \int_{0}^{\la}  d\alpha\, 
\, \int_{0}^{\la}  d\beta\,  \int_{\R^{d}} dx'\int_{\R^{d}} dy' \, \K_{\la-\alpha} (x,x')   \K_{\la-\beta} (y,y') \nn
		&& \qquad \qquad \times \int \xi_{\alpha}(x') \xi_{\beta}(y') 
\,d\nu(\xi) \nn
		&=&\int_{0}^{\la}  d\alpha\, 
\, \int_{0}^{\la}  d\beta\,  \int_{\R^{d}} dx'\int_{\R^{d}} dy' \, \K_{\la-\alpha} (x,x')   \K_{\la-\beta} (y',y) \nn
	&& \qquad \qquad \times \ \delta(\alpha-\beta)\,\delta(x'-y')
\nn&=& \int_{0}^{\la} d\alpha\ 
		\K_{2(\la -\alpha)}(x,y)
		= \lrp{I-e^{-\la C^{-1}}}C(x,y)
		\;.
	\eeq
Here we used the symmetry and the multiplication law for the semigroup $\K_{\la}$.  Also, as $\la\to\infty$,  
	\[
		D_{\la} \nearrow  C\;.
	\]

\setcounter{equation}{0}
\section{Reflection Positivity}
The free relativistic quantum field $\varphi(x)$ is a Wightman field on a Fock-Hilbert space $\calh$.  It arises from the Osterwalder-Schrader quantization of the Gaussian measure $d\mu_{C}(\Phi)$ with characteristic function 
	\be\label{Gaussian Functional}
		S_{C}(f) = e^{-\frac12 \lra{\bar{f},Cf}_{L^{2}}}\;.
	\ee
Here space-time is $d$-dimensional, and one requires $m^{2}>0$ if $d=1,2$.
This field was introduced by Kurt Symanzik \cite{S} as a random field and studied extensively in the free-field case by Edward Nelson \cite{N1}, and later by many others.  It is well-understood that such a random field is equivalent to a classical field acting on a Euclidean Fock space $\cale$ with no-particle state $\Omega^{E}$, see for example \cite{GJB}.  In terms of annihilation and creation operators satisfying $\comm{A(k)}{A(k')^{*}}=\delta(k-k')$, one can represent the classical random field as
	\be
		\Phi(x)
		= \frac{1}{(2\pi)^{d/2}} \int \lrp{A(k)^{*}+A(-k)} 
			\frac{1}{(k^{2}+m^{2})^{1/2}}\,e^{ikx}dk\;.
	\ee
  In this framework one can also write the Gaussian characteristic functional \eqref{Gaussian Functional} as 
	\be
		S_{C}(f) 
		= \lra{\Omega^{E}, e^{i\Phi(f)}\Omega^{E}}_{\cale}
		= \int_{\cals'} e^{i\Phi(f)} d\mu_{C}(\Phi)\;.
	\ee
Konrad Osterwalder and Robert Schrader discovered a more-general framework in 1972, based on the fundamental property of {\em reflection positivity} property \cite{OS1,OS2}. There is a formulation for fermion fields and for gauge fields, as well as for fields of higher spin.  So reflection positivity relates most known quantum theories with corresponding classical ones. 
 This construction is so simple and beautiful, it should be a part of every book on quantum theory.  Unfortunately that must wait for a number of new books to be written!
 
One identifies a time direction $t$ for quantization, and writes $x=(t,\vec x)$.  Let $\vartheta:(t,\vec x)\mapsto (-t,\vec x)$ denote time reflection, and $\Theta$ its push forward to $\cals'(\R^{d})$.  Then RP requires that for $A(\Phi)$ an element of the polynomial algebra $\cale_{+}$ generated by random fields $\Phi(f)$ with $f\in\cals(\R^{d}_{+})$, one has
	\be\label{Measure-RP}
		0\leqslant\lra{A,A}_{\calh} 
		= \lra{A, \Theta A}_{\cale}\;.
	\ee
Let $\mathcal{N}$ denote the null space of this positive form and $\cale_{+}/\mathcal{N}$ the space of equivalence classes differing by a null vector.  The Hilbert space of quantum theory $\calh$ is the completion of the pre-Hilbert space $\cale_{+}/\mathcal{N}$,  in this inner product.  

The vectors in $\calh$ are called the OS quantization of vectors in $\cale_{+}$.  Operators $T$ acting on $\cale_{+}$ and preserving $\mathcal{N}$, also have a quantization $\widehat{T}$  as operators on $\calh$, defined by	$\widehat{T}\widehat{A}=\widehat{TA}$.  This is summarized in the commuting exact diagram of Figure \ref{fig:OS}. 
\begin{figure}[h]
\[
    \xymatrix{
  & 0 \ar[d] & 0 \ar[d] &
   \\
 &  \mathcal{N} \ar[r] ^{T}   \ar[d]_{\text{Id}}&  \mathcal{N} \ar[d]^{\text{Id}}&
   \\
 &   \mathcal{E}_{+} \ar[r] ^{T} \ar[d]_{\wedge}&  \mathcal{E}_{+} \ar[d]^{\wedge}&
  \\
&   \cale_{+}/\mathcal{N} \ar[r] ^{\widehat{T}}  \ar[d]&   \cale_{+}/\mathcal{N} \ar[d]&
     \\
&   0 &  0&
   }
\]
	\caption{OS Quantization of Vectors $[A]\in\cale_{+}/\mathcal{N}\mapsto \widehat A$, and of Operators $T\mapsto \widehat T$.}
	\label{fig:OS}
\end{figure}
\goodbreak

Many families of non-Gaussian measures $d\mu(\Phi)$ on $\cals'(\R^{d})$ that are Euclidean-invariant and reflection-positive are known.  The first examples were shown to exist in space-time of two dimensions,  $d=2$, by Glimm~and~Jaffe \cite{GJ1,GJ2,GJ3,GJ4}, and Glimm, Jaffe, and Spencer \cite{GJS1,GJS2,GJS3,GJS4}.  Additional examples were given by Guerra,~Rosen,~and~Simon \cite{GRS1} and others..

In the more difficult case of  $d=3$ space-time dimensions, the only complete example known is the  $\Phi^{4}_{3}$ theory. Glimm~and~Jaffe proved that  in a finite volume, a reflection-positive measure exists for all couplings~\cite{GJ5}. They showed that one has a convergent sequence of renormalized, approximating action functionals $\mathfrak{A}_{n}$ whose exponentials $e^{-\mathfrak{A}_{n}}$   when multiplied by the standard Gaussian measure $d\mu_{C}(\Phi)$ converge weakly.  But the  limit is inequivalent to the Gaussian. The approximations $\mathfrak{A}_{n}$ are chosen to preserve reflection positivity.  This limit agrees in perturbation theory with the standard perturbation theory in physics texts for $\varphi^{4}_{3}$.  The physics result established in this paper is that the renormalized $\varphi^{4}_{3}$ Hamiltonian $H$ in a finite spatial volume is bounded from below.  Joel Feldman and Osterwalder  combined  the stability result of \cite{GJ5} with a modified version of the cluster expansions for Euclidean fields \cite{GJS1}, to obtain a Euclidean-invariant, reflection-positive measure on $\R^{3}$ for small coupling~\cite{FO}.  

The original stability bound paper~\cite{GJ5} took several years to finish.   In that paper we developed a method to show stability in a region of a cell in phase space of size $O(1)$, and to show independence of different phase space cells, with a quantitative estimate of rapid polynomial decay in terms of a dimensionless distance between cells.   This analysis allowed us to analyze partial expectation of degrees of freedom associated with the phase cells.  

It turned out that the ideas we used overlap a great deal with the ``renormalization group'' methods developed  by Kenneth Wilson~\cite{W1}, which appeared while we were still developing our non-perturbative methods for constructive QFT.  One major difference in  Wilson's approach, and what makes it so appealing, is that his methods are iterative.  Our original methods were inductive, using a somewhat  different method on each length scale. Wilson achieved this simplicity by ignoring effects which appeared to be small.  

While many persons have attempted to reconcile these two methods, much more work needs to be done. In spite of qualitative advances, the conceptually-simpler renormalization group methods have not yet been used to establish the physical clustering properties, that were  proved earlier using the inductive methods. 
The most detailed studies of $\Phi^{4}_{3}$ using the renormalization group methods have been carried out by Brydges, Dimock, and Hurd \cite{BDH1,BDH2}.  David~Moser gave a nice exposition and also some refinements~\cite{M}.  

\setcounter{equation}{0}
\section{The Main Result: $\boldsymbol{d\mu_{\la}}(\Phi)$ is Not Reflection Positive}\label{Sect:RP Fails}
Our main result is that the measure $d\mu_{\la}(\Phi)$ arising from stochastic quantization of the free field is not reflection positive for any stochastic time $\la<\infty$.  
In the case of the free field measure, $\lim_{\la\to\infty}d\mu_{\la}(\Phi)=d\mu_{C}(\Phi)$ exists and is the reflection-positive Gaussian with mean zero and covariance $C=(-\Delta + m^{2})^{-1}$.  
If a non-linear perturbation of the free field is continuous in a perturbation parameter $g$, then the lack of reflection positivity at $\lambda<\infty$ carries over as well to small values of $g$. 

\begin{theorem}\label{Theorem:RPFails}
If $2\leqslant d$ and  $\la<\infty$, $m<\infty$, then RP fails for $d\mu_{\la}(\Phi)$. 
 If $d=1$ and  $1<2\la m^{2}<\infty$, then the RP fails for $d\mu_{\la}(\Phi)$.  RP also fails for toroidal compactification of the space-time in any subset of the  coordinates.
\end{theorem}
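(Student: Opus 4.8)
The plan is to reduce reflection positivity (RP) of the mean‑zero Gaussian measure $d\mu_{\la}(\Phi)$, which has covariance $D_{\la}=(I-e^{-\la C^{-1}})C$, to a positivity property of $D_{\la}$, and then exhibit its failure. Testing condition~(ii) with $n=2$, a single real $f\in\cals(\R^{d}_{+})$, and the zero function, the $2\times2$ positivity already forces $\langle f,D_{\la}\,\vartheta f\rangle_{L^2}\geqslant0$; so it suffices to violate this inequality. First I would Fourier transform in the $d-1$ spatial coordinates, which diagonalizes $D_{\la}$ over the spatial momentum $\vp$ and, since $\vartheta$ acts only on the physical time $t$, turns the inequality into the requirement that, for every $\vp$, the Hankel‑type kernel $(t,u)\mapsto D_{\la}(t+u;\vp)$ be a positive‑semidefinite operator on $L^2(\R_+)$, where $D_{\la}(s;\vp)$ denotes the kernel of $D_{\la}$ in the variable $t$ at fixed spatial momentum $\vp$.

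Next I would make $D_{\la}(s;\vp)$ explicit. From $D_{\la}=\int_0^{\la}\K_{2\beta}\,d\beta$ (immediate from \refp{Covariance}) and the Gaussian form $\K_{2\beta}(s;\vp)=(4\pi\beta)^{-1/2}\,e^{-\beta\omega^2-s^2/(4\beta)}$ of the spatially Fourier‑transformed heat kernel, with $\omega^2=|\vp|^2+m^2$,
\[
  D_{\la}(s;\vp)=\frac1{2\sqrt\pi}\int_0^{\la}\beta^{-1/2}\,e^{-\beta\omega^2-\frac{s^2}{4\beta}}\,d\beta,\qquad s\geqslant0,
\]
which is the truncation at $\beta=\la$ of the corresponding integral giving the kernel $\tfrac1{2\omega}e^{-\omega|s|}$ of $C$. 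The decisive consequence of truncating appears at large $s$: once $s>2\la\omega$ the exponent $-\beta\omega^2-s^2/(4\beta)$ is strictly increasing on $(0,\la]$, so Laplace's method at the right endpoint gives $D_{\la}(s;\vp)=\dfrac{2\la^{3/2}}{\sqrt\pi\,s^{2}}\,e^{-\la\omega^2-\frac{s^2}{4\la}}\,\bigl(1+O(s^{-2})\bigr)$ as $s\to\infty$. Hence $\ln D_{\la}(s;\vp)=-\dfrac{s^2}{4\la}-2\ln s+O(1)$, so $\dfrac{d^2}{ds^2}\ln D_{\la}(s;\vp)=\dfrac2{s^2}-\dfrac1{2\la}+o(1)<0$ for all large $s$; that is, $s\mapsto\ln D_{\la}(s;\vp)$ is strictly concave on some half‑line $(s_0,\infty)$.

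To conclude I would use the elementary observation that a positive‑semidefinite Hankel kernel $g(t+u)$ on $L^2(\R_+)$ forces $\bigl(\begin{smallmatrix}g(2t_1)&g(t_1+t_2)\\ g(t_1+t_2)&g(2t_2)\end{smallmatrix}\bigr)\succeq0$ for all $t_1,t_2>0$, i.e.\ $g(t_1+t_2)^2\leqslant g(2t_1)g(2t_2)$; since $D_{\la}(s;\vp)>0$, this says $s\mapsto\ln D_{\la}(s;\vp)$ must be convex on $(0,\infty)$. Choosing $t_1\neq t_2$ inside the half‑line $(s_0,\infty)$ of strict concavity gives $\ln D_{\la}(t_1+t_2;\vp)>\tfrac12\bigl[\ln D_{\la}(2t_1;\vp)+\ln D_{\la}(2t_2;\vp)\bigr]$, so the $2\times2$ minor has negative determinant and the Hankel operator is not positive semidefinite — for every $\vp$ — contradicting the consequence of RP. This proves the theorem for all $d\geqslant2$ and $0<m,\la<\infty$. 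When $d=1$ there is no spatial Fourier variable and one works directly with $g(s)=D_{\la}(s)$, $\omega=m$; the same argument applies, the restriction $2\la m^2>1$ being the range in which the endpoint Laplace estimate and the resulting strict‑concavity inequality can be made explicit most cleanly. For toroidal compactification in any subset of the coordinates, Fourier transforming in those directions merely replaces the relevant continuous momenta by lattice momenta, leaving the per‑mode kernel $D_{\la}(s;\vp)$ — and thus the whole argument — unchanged.

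The step I expect to be the main obstacle is the analytic control behind the asymptotics: one needs the large‑$s$ behaviour of $D_{\la}(s;\vp)$ sharp and uniform enough (in particular across the crossover $s\approx2\la\omega$, with an explicit handle on the error term) to guarantee a genuine interval of strict concavity of $\ln D_{\la}(\cdot;\vp)$, equivalently a negative $2\times2$ determinant for concrete $t_1,t_2$. A convenient alternative source of both the smoothness on $(0,\infty)$ and the asymptotics is the identity $(-\partial_s^2+\omega^2)D_{\la}(s;\vp)=\delta(s)-(4\pi\la)^{-1/2}e^{-\la\omega^2-s^2/(4\la)}$, which follows from $D_{\la}=\int_0^{\la}\K_{2\beta}\,d\beta$ and $\partial_\beta\K_{2\beta}=-(-\Delta+m^2)\K_{2\beta}$. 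For $d=1$ the near‑cancellations among the terms of $D_{\la}(s)$ — already present as $s\to0$, where consecutive derivatives of $D_{\la}$ nearly cancel — are presumably what confines the clean statement to $2\la m^2>1$. The remaining ingredients are routine: the reduction of condition~(ii) to the covariance inequality, and the passage from the $\vp$‑integrated inequality to the per‑$\vp$ Hankel condition via test functions sharply localized in spatial momentum.
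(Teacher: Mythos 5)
Your proposal is correct in substance, but it takes a genuinely different route from the paper. Both arguments begin the same way, reducing RP of the mean-zero Gaussian $d\mu_{\lambda}(\Phi)$ to reflection positivity of the covariance, $0\leqslant\langle f,\vartheta D_{\lambda}f\rangle_{L^{2}}$ for $f$ supported at positive time. From there the paper chooses $f$ in the null space of the equilibrium RP form — a difference of two time-slices $h_{S}\otimes\delta_{S}-h_{T}\otimes\delta_{T}$ with $h_{T}=e^{T\mu}h$ — so that $\langle f,\vartheta D_{\lambda}f\rangle=-e^{-\lambda m^{2}}\langle \vartheta f,e^{\lambda\Delta}Cf\rangle$, and then proves the violation at \emph{small} time separation by a Taylor expansion at $T=0$: the second-order coefficient is essentially $\langle e^{-|u|\mu}g_{0},(2\mu^{2}-\tfrac{1}{\lambda}+\tfrac{u^{2}}{2\lambda^{2}})e^{-|u|\mu}g_{0}\rangle$, which is made positive for $d\geqslant2$ by putting the support of $\widetilde h$ outside the ball of radius $(2\lambda)^{-1/2}$, and which for $d=1$ yields only the range $2\lambda m^{2}\geqslant1$. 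You instead work at \emph{large} time separation: after diagonalizing in spatial momentum, RP forces the fixed-$\vec p$ time kernel $g(s)=D_{\lambda}(s;\vec p)$ to define a positive Hankel form, hence $g(t_{1}+t_{2})^{2}\leqslant g(2t_{1})g(2t_{2})$ (log-convexity, in the spirit of Bernstein--Widder: $g$ would have to be a superposition of decaying exponentials, as the equilibrium kernel $\tfrac{1}{2\omega}e^{-\omega s}$ is), while the truncation $D_{\lambda}=\int_{0}^{\lambda}e^{-\beta(-\Delta+m^{2})}\,d\beta$ produces Gaussian decay $\sim e^{-s^{2}/4\lambda}$, which is incompatible with log-convexity. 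This buys a conceptually transparent reason for the failure of RP, a proof that is uniform in the geometry of the spatial directions, and in fact a \emph{stronger} $d=1$ statement (failure for all $0<\lambda m^{2}<\infty$, not only $2\lambda m^{2}>1$); your parenthetical rationalization of the paper's $d=1$ restriction is beside the point, since that restriction is an artifact of the small-separation expansion, not of your method.

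Two points need tightening. First, do not differentiate the asymptotic expansion: $\ln g(s)=-\tfrac{s^{2}}{4\lambda}-2\ln s+O(1)$ does not by itself control $(\ln g)''$. But, as you anticipate, the derivative is unnecessary: with the endpoint Laplace estimate in multiplicative form, $g(s)=\tfrac{2\lambda^{3/2}}{\sqrt{\pi}\,s^{2}}e^{-\lambda\omega^{2}-s^{2}/4\lambda}(1+o(1))$, take $2t_{1}=s-a$, $2t_{2}=s+a$ with $a>0$ fixed and $s\to\infty$ to get $\ln g(s-a)+\ln g(s+a)-2\ln g(s)\to-\tfrac{a^{2}}{2\lambda}<0$; even cruder two-sided Gaussian bounds suffice if one takes $t_{2}=2t_{1}$ both large. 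The passage to genuine test functions (momentum localization near a fixed $\vec p$, smooth bumps in place of $\delta_{t_{i}}$) is routine since the violation is strict and the kernel is continuous. Second, your torus remark is correct only for compactification of \emph{spatial} coordinates, where momenta become discrete and the per-mode kernel is unchanged; if the \emph{time} coordinate is compactified, separations are bounded and the kernel is periodized, so your large-$s$ argument does not apply as stated — there the paper's small-separation expansion is the more robust tool (though the paper itself gives no details for the toroidal claim either).
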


Lack of the RP property for $\lambda<\infty$ means that the measure $d\mu_{\la }(\Phi)$ does not yield a  corresponding quantum theory.  
It suggests that one needs to have a better understanding of stochastic quantization in order to pass from a measure on function space to quantum theory.  
One possibility is to find a different distribution for the stochastic driving force in the equation, namely a covariance that preserves reflection positivity for all  $\la<\infty$, or a different equation altogether with that property.  An alternative possibility is to develop a new method to control the $\la\to\infty$ limit, in order to establish positivity in the limit without having a positive approximation.  The aim of either approach would be to understand better how to use stochastic quantization as a mathematical route to establish the existence of a non-linear quantum field theory.    

\begin{proof}[Proof of Theorem \ref{Theorem:RPFails}]  
If $d\mu_{\la}(\Phi)$ is a Gaussian measure with mean zero, then the RP property \eqref{Measure-RP}   with respect to time reflection $\vartheta$  is equivalent to RP for the covariance $D_{\la}$  on $\R^{d}_{+}$, see for example \cite{GJB}.  This means that for each function $f\in L^{2}(\R^{d}_{+})$, supported in the positive-time half-space $\R^{d}_{+}=\R^{d-1}\times \R_{+}$, one has 
	\be\label{CovarianceRP}
		0\leqslant \lra{f, \vartheta Cf}_{L^{2}(\R^{d})}\;.
	\ee
The RP property \eqref{CovarianceRP} for $D_{\la}$ on $\R^{d}_{+}$,  with respect to time reflection $\vartheta$ means: 
This RP property can be verified on $\R^{d}$ by analysis of the Fourier transform.  On subdomains of $\R^{d}$, or in other geometries, it is convenient to note the equivalence between $RP$ for $C$ and the monotonicity of covariance operators $C_{D}\leqslant C_{N}$ with Dirichlet and with Neumann boundary data on the reflection plane \cite{GJ-RP}.   

The RP property for a Gaussian measure with non-zero mean is also equivalent to reflection positivity of the covariance $D_{\la}$ of the measure.  If the mean is 
	\be
	M_{\la}(x)
	= \int \Phi_{\la}(x) \,d\nu(\xi)
	=  (\K_{\la}\,\Phi_{0})(x)\;,
	\ee  
let $\Psi$ have a Gaussian distribution $d\mu_{\la}$ with mean zero and covariance $D_{\la}$. Then  the measure
	\be
		d\mu_{\la}(\Phi) = d\mu_{\la}(\Psi + M_{\la})\;,
	\ee
has mean $M_{\la}(x)$ and covariance $D_{\la}$.  Therefore $d\mu_{\la}(\Phi)$ is reflection positive, if and only if its covariance $D_{\la}$ is reflection positive. 

The RP form defines the pre-inner product on the one-particle space $L^{2}(\R^{d}_{+})$ as 
	\be\label{RPInner}
		\lra{f,f}_{\calh}
		=  \lra{f, \vartheta D_{\la}f}_{L^{2}(\R^{d})}\;.
	\ee 
In order to establish a counterexample to RP for the Gaussian measure, it is only necessary to find one function $f$, supported at positive time, for which 
	\be\label{Gaussian RP Condition}
		 \lra{f, \vartheta \,D_{\la}\,f}_{L^{2}}<0\;.
	\ee
Our strategy is to find a function $f$, in the null-space $\mathcal{N}_{C}$ of the reflection-positive covariance $C$, namely the equilibrium measure, for which \eqref{Gaussian RP Condition} holds.

\subsection{The Case $\boldsymbol{d=1}$
}
Since 
	\be
		\norm{f}_{\calh}^{2} 
		= \lra{f, \vartheta D_{\la}f}_{L^{2}}
		\leqslant \lra{f, D_{\la}f}_{L^{2}}
		 \leqslant \lra{f, Cf}_{L^{2}}
		 =\lra{f,f}_{\mathfrak{H}_{-1}}\;,
	\ee	
the inner product \eqref{RPInner} extends by continuity from $L^{2}$ to the Sobolev space $f\in\mathfrak{H}_{-1}(\R^{d}_{+})$.  	
For $d=1$ the Sobolev space contains the Dirac measure $\delta_{t}$ localized at $0\leqslant t$, so in our example we choose $f$ to be a linear combination of two Dirac delta functions localized at two distinct non-negative times $0\leqslant s<t$. 

We claim that 
	\be
		f(u) = e^{ms}\,\delta_{s}(u) -e^{mt}\,\delta_{t}(u)\in \mathcal{N}_{C}\;.
	\ee
In fact 
	\be
		(\vartheta f)(u) = e^{ms}\,\delta_{s}(-u) -e^{mt}\,\delta_{t}(-u)
		= e^{ms}\,\delta_{-s}(u) -e^{mt}\,\delta_{-t}(u)\;,
	\ee
so 
	\beq\label{Null Vector}
		\lra{ f, \vartheta Cf}_{L^{2}}
		&=& \lra{\vartheta f, Cf}_{L^{2}}
		= \lra{(e^{ms}\delta_{-s}-e^{mt}\delta_{-t}),C(e^{ms}\delta_{s}-e^{mt}\delta_{t})}_{L^{2}}\nn
		&=& e^{2ms}\,\lra{\delta_{-s},C\delta_{s}}
		+e^{2mt}\,\lra{\delta_{-t},C\delta_{t}}\nn
		&&\quad - e^{m(t+s)}\,\lra{\delta_{-s},C\delta_{t}}
		-e^{m(t+s)}\,\lra{\delta_{-t},C\delta_{s}}\nn
		&=&  \frac{1}{2m} \lrp{1+1-1-1}
		=0\;,
	\eeq
and $f\in \mathcal{N}_{C}$ as claimed.  We study 
	\be
		\lra{f, \vartheta D_{\la} f}_{L^{2}} 
		= -e^{-\la m^{2}}
		\lra{f, \vartheta e^{\la \Delta}Cf}_{L^{2}}\;.
	\ee
Then $f$ gives a counterexample to RP for $\la m^{2}<\infty$ in case for some $s,t$,  
	\be\label{NtS}
		0<\lra{\vartheta f, e^{\la \Delta}Cf}_{L^{2}} \;.
	\ee  
%
Define
	\be\label{W}
		W_{\la,m}=  \lrp{4\pi\la}^{1/2}2m\, e^{\la\Delta}\,C\;.
	\ee
Recall that the operator $2mC$ has integral kernel
	\be
		2mC(u,s) = e^{-m\abs{u-s}}\;.
	\ee
Furthermore the operator $\lrp{4\pi\la}^{1/2} e^{\la \Delta}$ has the integral kernel 
	\be
		\lrp{\lrp{4\pi\la}^{1/2}e^{\la \Delta}}(t,u) 
		= \,e^{-\frac{(t-u)^{2}}{4\la}}\;.
	\ee
Thus the integral kernel $W_{\la,m}(t,s)=W_{\la,m}(t-s)$ of $W_{\la,m}$ equals 
	\be\label{W}
		W_{\la,m}(t-s)
		=  \int_{-\infty}^{\infty} du\,  e^{-\frac{(t-s-u)^{2}}{4\la} -m\abs{u} }	
		= W_{\la,m}(s-t)\;.
	\ee
The second equality in \eqref{W} shows that $W_{\la,m}$, which is real, is also  hermitian.   Remark that 
	\be\label{ScaledW}
		W_{\la, m} (t)
		=m^{-1}\,W_{\la m^{2},1}(mt)\;.
	\ee
Thus  without loss of generality we may study $W_{\lambda}(t)=W_{\lambda,1}(t)$.

\subsubsection{A Numerical Check}
We used Mathematica to make a numerical check of whether $f$ violates RP in case $s=0$ and $\la=m=1$.  We study the function  
	\be
		F(t) = \frac12\lrp{W_{1}(0)+e^{2t}\,W_{1}(2t)} -e^{t}\,W_{1}(t)\;,
		\quad\text{with}\quad
		W_{1}(t) = \int_{-\infty}^{\infty} du\,e^{-\frac{(t-u)^{2}}{4}-\abs{u}} \;,
	\ee
If $F(t)>0$ for any positive $t$, then RP fails to hold.   
The Mathematica plot of $F(t)$ appears in Figure \ref{fig:RP-Violated}.  Clearly there are values of $t\in(0,1.5)$ for which $F(t)$ is positive.\footnote{I am grateful to Alex Wozniakowski  for assisting me to use Mathematica to test whether $F(t)$ changes sign.}  So this indicates that RP does not hold for $\Phi_{\la}(t)$ in the measure $d\mu(\Phi_{\la})$.  
\begin{figure}[ht!]
	\centering
	\includegraphics{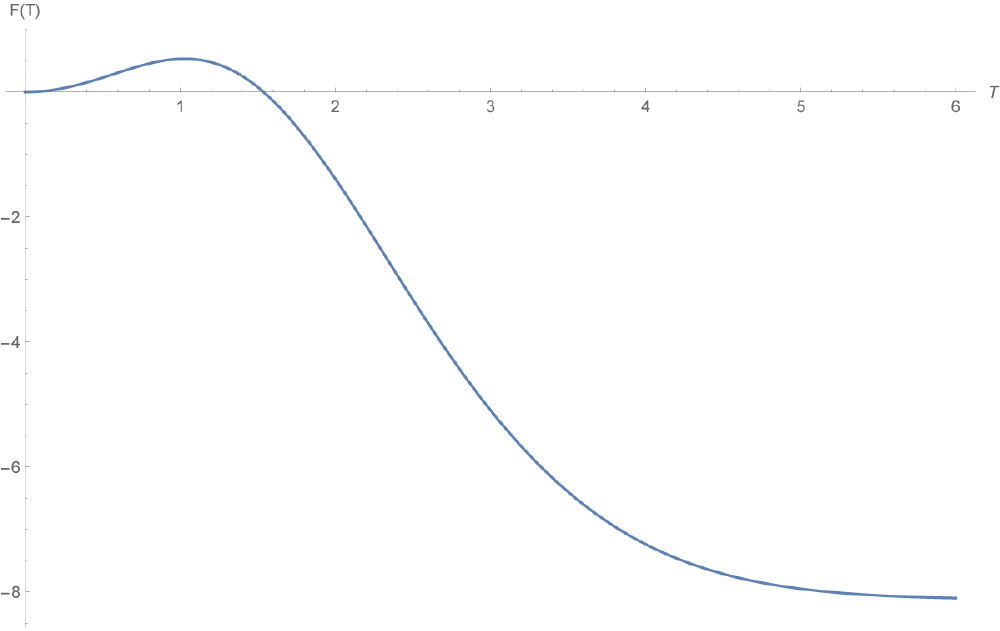}
	\caption{RP of $D_{1}$ requires $F(t)=\frac12\,{W_{1}(0)+ \frac12\,e^{2t}\,W_{1}(2t)}-e^{t}\,W_{1}(t)\leqslant 0$ for all $0\leqslant t$.}
	\label{fig:RP-Violated}
\end{figure}

\subsubsection{The Proof} 
\label{Sect:Counterexample-1}
Since the RP violation occurs for small $t$, we base our proof on expanding $F(t)$ as a power series in $t$. First we extend $W(t)$,  defined for positive $t$, to be an even function at negative $t$. Observe that $F(t)$ is an  analytic function near $t=0$.  Therefore the sign of $F(t)$ for small $0<t$ is determined by the first non-zero term in the power series at $t=0$, and this will be the term of second order.   We show that RP is violated for $\frac12 \leqslant \la m^{2}$.

 From \eqref{W} one has for real $t$, 
	\be
	0<W_{\la}(t) = W_{\la}(0) + \frac{t^{2}}{2} W''_{\la}(0) +O(t^{4})\;,
	\ee
and 
	\be
		 W''_{\la}(0)= c_{\la}  -\frac{1}{2\la} \,W_{\la}(0)\;, \hensp{with}
		 0< c_{\la}=\frac{1}{4\la^{2}}\int_{-\infty}^{\infty} du\, u^{2}e^{-\frac{u^{2}}{4\la}-\abs{u}} \;.
	\ee
Hence
	\be
	F(t) 
	= \frac{ t^{2} }{2}\lrp{W_{\la}(0)+W''_{\la}(0)} +O(t^{3})
	=  \frac{ t^{2} }{2}\lrp{c_{\la}+\lrp{1-\frac{1}{2\la}}W_{\la}(0)} +O(t^{3})\;.
		\ee
As $0<c_{\la}$ and $0<W_{\la}(0)$, the leading non-zero coefficient in $F(t)$ is strictly positive for $\frac12\leqslant\la$. 
Hence RP does not hold for $\frac12\leqslant\la<\infty$.  Reinterpreting this with respect to the scaled function $W_{\la,m}(t)$ according to \eqref{ScaledW}, we infer:  RP fails for $\frac12 \leqslant \la m^{2}<\infty$.

\subsection{The Case $\boldsymbol{1<d}$ and  $\boldsymbol{\la, m^{2}<\infty}$}
\label{Sect:Proof d=1}
For $1<d$  we show that RP for $d\mu_{\la}(\Phi_{\la})$ fails  for all $\la m^{2} \in (0,\infty)$.   Denote the Sobolev-space inner products on $\R^{d}$ and $\R^{d-1}$ respectively as 
	\be
	\lra{f_{1},f_{2}}_{-1}
	= \lra{f_{1},C\,f_{2}}_{L^{2}(\R^{d})}\;,
	\quad\text{and}\quad
	\lra{h_{1},h_{2}}_{-\frac12}
	= \lra{h_{1}, \frac{1}{2\mu}\,h_{2}}_{L^{2}(\R^{d-1})}\;. 
	\ee
In this case $\mu=\sqrt{-\vec\nabla^{2}+m^{2}}$.  We also denote $\mu(\vec p)=\sqrt{\vec p^{\,2}+m^{2}}$ as the multiplication operator in Fourier space given by $\mu$. 

Start by choosing a real, spatial test function $h(\vec x)\in \cals(\R^{d-1})$, whose Fourier transform $\widetilde h(\vec p)$ has compact support.  (Reality only requires $\widetilde{h}(\vec p)=\overline{\widetilde{h}(-\vec p)}$.) Define the family of functions
	\be
		h_{T}(\vec x) 
		= \frac{1}{(2\pi)^{(d-1)/2} }\,\int_{\R^{d-1}} 
		e^{T\mu(\vec p)} \,\widetilde{h}(\vec p) \,e^{i\vec p\cdot \vec x} \,d\vec p\in \cals(\R^{d-1})\;,
	\ee
with 
	\be
		h_{0}=h\;,
		\hensp{and} 0\leqslant T\;.
	\ee
Let $\delta_{S}$ denote the one-dimensional Dirac measure with density $\delta_{S}(t)=\delta(t-S)$. For $0\leqslant S < T$  define  a space-time one-particle function $f(x)=f_{S,T}(x)$ by 
	\be\label{f}
		f
		= h_{S} \otimes \delta_{S}  -  h_{T} \otimes \delta_{T}\;.
	\ee

Note that $f$ is in the null space of the RP form defined by $C$, relative to the time-reflection  $\vartheta$.  In fact 
	\beq\label{d-Test-1}
		\lra{f,\vartheta f}_{-1}
		&=& \lra{f,\vartheta Cf}_{L^{2}}\nn
		&=& \sobolev{h_{S}}{e^{-2S\mu }\,h_{S}}+
		 \sobolev{h_{T}}{e^{-2T\mu}\,h_{T}} \nn
		 && \qquad
		 -  \sobolev{h_{S}}{e^{-(S+T)\mu}\,h_{T}}  
		 -  \sobolev{h_{T}}{e^{-(S+T)\mu}\,h_{S}}  \nn
		 &=&  \sobolev{h}{h} + \sobolev{h}{h}-\sobolev{h}{h}
		 -\sobolev{h}{h}
		 =0\;.
	\eeq
Hence our test of RP relies on whether $\lra{f,f}_{\calh}$ is non-negative, where  
	\be
		\lra{f,f}_{\calh}
		= \lra{f,\vartheta D_{\la}f}_{L^{2}}
		= -  \lra{\vartheta f,  e^{-\la \lrp{-\Delta +m^{2}}}Cf}_{L^{2}}\;.
	\ee
Expanding $f$ according to \eqref{f} yields four terms, each proportional to 
	\beq
		F(t_{1}, t_{2})
		&=& \lra{\lrp{h_{t_{1}}\otimes\delta_{-t_{1}}}, e^{-\la(-\Delta +m^{2})}\,C\,\lrp{ h_{t_{2}} \otimes \delta_{t_{2}} }}_{L^{2}}\nn
		 &=&  \lra{\lrp{ g_{t_{1}}  \otimes \delta_{-t_{1}}}, 
		 X\,\lrp{ g_{t_{2}}  \otimes \delta_{t_{2}}}  }_{L^{2}}\;.
	\eeq
Here  $g_{t}=e^{\frac12 \la \vec\nabla^{2}}\,h_{t}\in\cals(\R^{d-1})$, and  $X=\lrp{e^{-\la(-\frac{\partial^{2}}{\partial t^{2}} +m^{2})}\,C}$.    Note that $\vartheta\delta_{t_{1}}=\delta_{-t_{1}}$, as  $(\vartheta\delta_{t_{1}})(u)=(\vartheta\delta)(u-t_{1})=\delta(-u-t_{1})=\delta(u+t_{1})=\delta_{-t_{1}}(u)$.  However $\vartheta$ does not affect the time in $h_{t_{1}}$. 

The integral kernel for $X$ is real and has the form
	\beq
		X(x,x')
		&=& (4\pi\la)^{-1/2} e^{-\la m^{2}}
			\int_{\R^{4}} du d\vec u \,e^{-\frac{(t-u)^{2}}{4\la}}\,\delta(\vec x-\vec u) 
			\,\lrp{\frac{1}{2\mu}\, e^{-\abs{u-t'}\mu}}(\vec u-\vec x')\nn
		&=&  (4\pi\la)^{-1/2} e^{-\la m^{2}}
			\int_{\R} du 
			\,\lrp{\frac{1}{2\mu}\, e^{-\frac{(t-t'-u)^{2}}{4\la}-\abs{u}\mu}}(\vec x-\vec x')\;.
	\eeq
Thus 
	\be
		{F(t_{1}, t_{2})
		= (4\pi\la)^{-1/2} e^{-\la m^{2}}
		\int_{-\infty}^{\infty} du\,e^{-\frac{(t_{1}+t_{2}-u)^{2}}{4\la}}
			\lra{e^{-\abs{u}\mu}g_{t_{1}}, 
				e^{-\abs{u}\mu}g_{t_{2}}}_{-\frac12}}\;.
	\ee
Since $f$ is real and the kernels are real,  $F(t_{1},t_{2})=F(t_{2},t_{1})$ is real.  Also it is clear from inspection that the compact support of $\widetilde h$ ensures that $F(t_{1},t_{2})$ extends in a neighborhood of $(t_{1},t_{2})=(0,0)$ to a complex analytic function of $(t_{1},t_{2})$ with a convergent power series at the origin. 

Combining these remarks, 
	\be
		\lra{f,f}_{\calh}
		= -\lrp{ F(S,S)+F(T,T)-F(S,T)-F(T,S)}\;.
	\ee
As in \S\ref{Sect:Counterexample-1}, we take $S=0$ and $0<T$. Define $F(T)$ by
	\be\label{Relation to F}
		 \lra{f,f}_{\calh}
		= - (4\pi\la)^{-1/2} e^{-\la m^{2}}  F(T)\;.
	\ee
Then  
	\be
		F(T)= (4\pi\la)^{1/2} e^{\la m^{2}} \lrp{F(0,0) + F(T,T) -2F(0,T)}\;.
	\ee

The function $f$ provides a counterexample to RP if for any $T$ one has both  $0<F(T)$ and $\la,  m^{2}<\infty$.	
Expand $F(T)$ as a power series at $T=0$.  We claim that $F(0)=F'(0)=0$, so 
	\be
		F(T) = \frac{T^{2}}{2}\,F''(0) +O(T^{3})\;.
	\ee
Clearly $F(0)=0$. Also  
	\beq
		F'(T) 
		&=&- \frac{1}{\la}\int_{-\infty}^{\infty} du\,
			(2T-u)
			e^{-\frac{(2T-u)^{2}}{4\la}}
			\lra{e^{-\abs{u}\mu}g_{T}, 
				e^{-\abs{u}\mu}g_{T}}_{-\frac12}\nn
		&&\qquad + 2\int_{-\infty}^{\infty} du\,e^{-\frac{(2T-u)^{2}}{4\la}}
			\lra{e^{-\abs{u}\mu}g_{T}, 
				\mu \,e^{-\abs{u}\mu}g_{T}}_{-\frac12}\nn
		&&\qquad
		+2\,\frac{1}{2\la} \int_{-\infty}^{\infty} du\,
			(T-u)
			e^{-\frac{(T-u)^{2}}{4\la}}
			\lra{e^{-\abs{u}\mu}g_{0}, 
				e^{-\abs{u}\mu}g_{T}}_{-\frac12}\nn
		&&\qquad
		-2 \int_{-\infty}^{\infty} du\,
			e^{-\frac{(T-u)^{2}}{4\la}}
			\lra{e^{-\abs{u}\mu}g_{0}, 
				\mu\,e^{-\abs{u}\mu}g_{T}}_{-\frac12}\;.
	\eeq
Taking $T=0$, the second and last terms cancel, leaving an integrand that is an odd function of  $u$.  Therefore $F'(0)=0$.   Likewise the second derivative equals
	\beq
		F''(T) 
		&=&- \frac{2}{\la}\int_{-\infty}^{\infty} du\,
			e^{-\frac{(2T-u)^{2}}{4\la}}
			\lra{e^{-\abs{u}\mu}g_{T}, 
				e^{-\abs{u}\mu}g_{T}}_{-\frac12}\nn
		&&\qquad +\frac{1}{\la^{2}}\int_{-\infty}^{\infty} du\,
			(2T-u)^{2}
			e^{-\frac{(2T-u)^{2}}{4\la}}
			\lra{e^{-\abs{u}\mu}g_{T}, 
				e^{-\abs{u}\mu}g_{T}}_{-\frac12}\nn
		&& \qquad - \frac{2}{\la}\int_{-\infty}^{\infty} du\,
			(2T-u)
			e^{-\frac{(2T-u)^{2}}{4\la}}
			\lra{e^{-\abs{u}\mu}g_{T},  \mu\,
				e^{-\abs{u}\mu}g_{T}}_{-\frac12} \nn
		&&\qquad - \frac{2}{\la}\int_{-\infty}^{\infty} du\,
		(2T-u)
		e^{-\frac{(2T-u)^{2}}{4\la}}
			\lra{e^{-\abs{u}\mu}g_{T}, 
				\mu \,e^{-\abs{u}\mu}g_{T}}_{-\frac12}\nn
		&&\qquad + 4\int_{-\infty}^{\infty} du\,e^{-\frac{(2T-u)^{2}}{4\la}}
			\lra{e^{-\abs{u}\mu}g_{T}, 
				\mu^{2} \,e^{-\abs{u}\mu}g_{T}}_{-\frac12}\nn
		&&\qquad
		+ \frac{1}{\la} \int_{-\infty}^{\infty} du\,
			e^{-\frac{(T-u)^{2}}{4\la}}
			\lra{e^{-\abs{u}\mu}g_{0}, 
				e^{-\abs{u}\mu}g_{T}}_{-\frac12}\nn
		&&\qquad
		-\frac{1}{2\la^{2}} \int_{-\infty}^{\infty} du\,
			(T-u)^{2}
			e^{-\frac{(T-u)^{2}}{4\la}}
			\lra{e^{-\abs{u}\mu}g_{0}, 
				e^{-\abs{u}\mu}g_{T}}_{-\frac12}\nn
		&&\qquad
		+\frac{1}{\la} \int_{-\infty}^{\infty} du\,
			(T-u)
			e^{-\frac{(T-u)^{2}}{4\la}}
			\lra{e^{-\abs{u}\mu}g_{0}, \mu\,
				e^{-\abs{u}\mu}g_{T}}_{-\frac12}\nn
		&&\qquad
		+\frac{4}{\la} \int_{-\infty}^{\infty} du\,
		(T-u)
			e^{-\frac{(T-u)^{2}}{4\la}}
			\lra{e^{-\abs{u}\mu}g_{0}, 
				\mu\,e^{-\abs{u}\mu}g_{T}}_{-\frac12}\nn
		&&\qquad
		-2 \int_{-\infty}^{\infty} du\,
			e^{-\frac{(T-u)^{2}}{4\la}}
			\lra{e^{-\abs{u}\mu}g_{0}, 
				\mu^{2}\,e^{-\abs{u}\mu}g_{T}}_{-\frac12}\;.
	\eeq
Thus for $T=0$, 
	\beq\label{F''0}
		F''(0)
		&=& \int_{-\infty}^{\infty} du\,e^{-\frac{u^{2}}{4\la}}
			\lra{e^{-\abs{u}\mu}g_{0}, 
			\lrp{2\mu^{2}-\frac{1}{\la} +\frac{u^{2}}{2\la^{2}}}
			\,e^{-\abs{u}\mu}g_{0}}_{-\frac12}\;.
	\eeq
The positivity of $F''(0)$ would be a consequence of the expectation of the operator ${2\mu^{2}-\frac{1}{\la} +\frac{u^{2}}{2\la^{2}}}$ being positive in the vectors $e^{-|u|\mu}\,g_{0}$ under consideration.  

The integral of  the third term, $u^{2}/2\la^{2}$, is  strictly positive for all $\la<\infty$. 
Furthermore $\mu$ acts in Fourier space as multiplication by $\mu(\vec p)$, so $m\leqslant \mu$, and $0\leqslant 2\mu^{2}-\la^{-1}$ if $\frac12\leqslant \la m^{2}$.  This agrees with the conclusion of \S\ref{Sect:Counterexample-1}.
But as $1<d$, we can assume that the support of $\widetilde h$ (which is also the support of $e^{-|u|\mu(\vec p)}\widetilde {g_{0}}$) lies outside the ball of radius $\lrp{2\la}^{-1/2}$.  This entails $\la^{-1}\leqslant2\mu(\vec p)^{2}$ on the support of $\widetilde h$, and  $0\leqslant 2\mu^{2}- \la^{-1}$  on the domain of functions $h$ we consider.  

Therefore we infer for such $h$ that $0<F''(0)$.  Consequently for small, strictly positive $T$, one has $0<F(T)$. Assuming $\la,  m^{2}<\infty$, the relation \eqref{Relation to F} shows that $\lra{f,f}_{\calh}<0$.  Hence we conclude that RP fails in $1<d$ for all $0<\la, m^{2}<\infty$.
\end{proof}


\begin{thebibliography}{JLWW}

\bibitem{BDH1}  David Brydges, Jonathan Dimock, and Thomas Hurd,
Weak perturbations of Gaussian measures, 1--28, and Applications of the renormalization group,   171--190.   In: J.S. Feldman, R. Froese, L.M. Rosen, editors, Mathematical Quantum Field Theory I: CRM Proceedings \& Lecture notes, Volume 7,  Providence, RI, American Mathematics Society, (1994). 

\bibitem{BDH2}  David Brydges, Jonathan Dimock, and Thomas Hurd,
The short distance behaviour of $\phi^{4}_{3}$, {\em Commun. Math. Phys.,} {\bf 172}  (1995), 143--186.

\bibitem{FO}  Joel Feldman and Konrad Osterwalder,   The Wightman axioms and the mass gap for weakly coupled $\phi^{4}_{3}$ quantum field theories.  {\em Ann. Physics}  {\bf 97} (1976),  80--135.

\bibitem{GW}  I. M.  Gelfand and M. I. Vilenkin, Generalized Functions, Volume 4, Applications of Harmonic Analysis,  translated by Amiel Feinstein, New York, Academic Press, 1964.  

\bibitem{GRS1} Francesco Guerra, Lon Rosen, and Barry Simon, 
The $P(\phi)_{2}$ Euclidean Quantum Field Theory as Classical Statistical Mechanics I, {\em Ann. Math.}  {\bf 101} (1975), 111--259.

\bibitem {GJ1} James Glimm and Arthur Jaffe,
A $\lambda\phi^4$ Quantum Field Theory without Cut-offs. I, {\it
Phys. Rev.,} {\bf 176} (1968), 1945--1961.
 
\bibitem {GJ2} James Glimm and Arthur Jaffe, The $\lambda(\phi^4)_2$ Quantum Field Theory without Cut-offs:  II.
The Field Operators and the Approximate Vacuum, 
{\it Ann. of Math.,} {\bf 91} (1970), 362--401.

\bibitem {GJ3} James Glimm and Arthur Jaffe, 
The $\lambda(\phi^4)_2$ Quantum Field Theory without Cut-offs.
III. The Physical Vacuum, {\it Acta Math.} {\bf 125} (1970), 203--267.

\bibitem {GJ4} James Glimm and Arthur Jaffe, 
The $\lambda(\phi^4)_2$ Quantum Field Theory without Cut-offs:  IV.
Perturbations of the Hamiltonian,
{\em Jour. Math. Phys.}  {\bf 13} (1972), 1568--1584.

\bibitem {GJ5} James Glimm and Arthur Jaffe, Positivity of the $\phi{^4_3}$ Hamiltonian,  {\it Fortschritte
der Physik,} {\bf 21} (1973), 327--376.

\bibitem{GJ-RP} James Glimm and Arthur Jaffe, A Note on Reflection Positivity,  {\em Lett. Math. Phys.}, {\bf 3} (1979), 377--378. 

\bibitem{GJB}  James Glimm and Arthur Jaffe, {\em Quantum Physics}, New York,  Springer Verlag, 1987.

\bibitem{GJS1} James Glimm, Arthur Jaffe, and Thomas Spencer,
The Particle Structure of the Weakly Coupled $P(\phi)_2$ Model and Other
Applications of High Temperature Expansions, Part I:  Physics of Quantum Field
Models, in {\it Constructive Quantum Field
Theory}, Editor, A.S. Wightman, Heidelberg, Springer Lecture Notes in Physics Volume 25,
(1973).


\bibitem{GJS2} James Glimm, Arthur Jaffe, and Thomas Spencer,
The Wightman Axioms and Particle Structure in the $P(\phi)_2$ Quantum Field
Model,  {\it Ann. of Math.,} {\bf 100} (1974),
585--632.

\bibitem{GJS3} James Glimm, Arthur Jaffe, and Thomas Spencer,
A Convergent Expansion about Mean Field Theory, Part I.  The Expansion,   {\it Ann. Phys.} {\bf 101} (1976), 610--630.


\bibitem{GJS4} James Glimm, Arthur Jaffe, and Thomas Spencer,
A Convergent Expansion about Mean Field Theory, Part II.
Convergence of the Expansion,  
{\it Ann. Phys.} {\bf 101} (1976), 631--669.

\bibitem{H1}  Martin Hairer,  {\em Introduction to Stochastic Partial Differential Equations}, \url{ arXiv:0907.4178. 2009}.

\bibitem{H2}  Martin Hairer, {\em A Theory of Regular Structures},
\url{arXiv:1303.5113v4}.

\bibitem{JJR-1}  Arthur Jaffe, Christian J\"akel, and Roberto Martinez, II,  Complex Classical Fields: A Framework for Reflection Positivity, 
 {\em Commun. Math. Phys.}  {\bf 329} (2014), 1--28. 
 
\bibitem{N1}  Edward Nelson,  The Euclidean Markov Field, {\em J. Funct. Anal. }  {\bf 12} (1973), 211--227.

\bibitem{N2} Edward Nelson, Derivation of the Schr\"odinger Equation from Newtonian Mechanics, {\em Phys. Rev.} {\bf 150} (1966), 1079--1085.
 
 \bibitem{NB} Edward Nelson, {\em Dynamical Theories of Brownian Motion}, Mathematical Notes, Princeton, NJ, Princeton University Press, 1967.
 
\bibitem{M}  David Moser, Renormalization of $\phi^{4}_{3}$ quantum field theory, E.T.H. Diploma Thesis, 2006.

\bibitem{OS1} Konrad Osterwalder and Robert Schrader,
Axioms for Euclidean Green's functions, I.
\textit{Commun. Math. Phys.} {\bf 31}  (1973),  83--112.
\bibitem{OS2} Konrad Osterwalder and Robert Schrader,
Axioms for Euclidean Green's functions, II.
\textit{Commun. Math. Phys.}  {\bf 42} (1975),  281--305.

\bibitem{PW}  Georgio Parisi and Wu Yongshi, Perturbation theory without gauge fixing, 
{\em Scientia Sinica} {\bf 24} (1981), 483--496.

\bibitem{S}  Kurt Symanzik, {\em A Modified Model of Euclidean Quantum Field Theory}, Courant Institute of Mathematical Sciences, Report IMM-NYU 327, June 1964.


\bibitem{W1}  
Kenneth G. Wilson, Renormalization Group and Critical Phenomena. I. Renormalization Group and the Kadanoff Scaling Picture, {\em Phys.  Rev.} {\bf B4 (9)} (1971), 3174.
\end{thebibliography}
 \end{document}